\newcommand{\eb}{\begin{equation}}
\newcommand{\ee}{\end{equation}}
\newcommand{\ebx}{\begin{equation*}}
\newcommand{\eex}{\end{equation*}}
\newcommand{\dotproduct}{\langle\cdot,\cdot\rangle}
\newcommand{\dotproductz}{\langle z,\bar z\rangle}
\newtheorem{lemma}{Lemma}[section]
\newtheorem{proposition}[lemma]{Proposition}
\newtheorem{theorem}[lemma]{Theorem}
\newtheorem{corollary}[lemma]{Corollary}
\newtheorem{definition}[lemma]{Definition}
\newtheorem{conjecture}[lemma]{Conjecture}
\renewcommand*\env@matrix[1][*\c@MaxMatrixCols c]{%
  \hskip -\arraycolsep
  \let\@ifnextchar\new@ifnextchar
  \array{#1}}
\begin{document}

\title{On the rank of Hermitian polynomials\\ and the SOS Conjecture}

\author{Yun Gao\footnote{School of Mathematical Sciences, Shanghai Jiao Tong University, Shanghai, People's Republic of China. \textbf{Email:}~gaoyunmath@sjtu.edu.cn}, Sui-Chung Ng\footnote{School of Mathematical Sciences, Shanghai Key Laboratory of PMMP, East China Normal University, Shanghai, People's Republic of China. \textbf{Email:}~scng@math.ecnu.edu.cn}}

\maketitle

\begin{abstract}
Let $z\in\mathbb C^n$ and $\|z\|$ be its Euclidean norm. Ebenfelt proposed a conjecture regarding the possible ranks of the Hermitian polynomials in $z,\bar z$ of the form $A(z,\bar z)\|z\|^2$, known as the \textit{SOS Conjecture}, where SOS stands for ``sums of squares". In this article, we employed and extended the recent techniques developed for local orthogonal maps to study this conjecture and its generalizations to arbitrary signatures. 

We proved that for a given Hermitian form $\langle\cdot,\cdot\rangle_\star$ on $\mathbb C^n$ of any signature with at least two non-zero eigenvalues, the ranks of the Hermitian polynomials of the form $A(z,\bar z)\|z\|^2_\star$ do not lie in a finite number of non-trivial gaps on the real line. Not only is this consistent with the SOS Conjecture, but it also demonstrates that in fact the conjecture qualitatively holds in all signatures except the trivial ones. In addition, these gaps can be explicitly calculated in terms of $n$ and the signature of $\langle\cdot,\cdot\rangle_\star$, and the number of such gaps and their sizes are of the same order as those stated in the SOS Conjecture.
\end{abstract}

\section{Introduction}

Denote by $\|z\|$ the Euclidean norm of a point $z\in\mathbb C^n$. For a Hermitian polynomial $A(z,\bar z)\in\mathbb C[z_1,\ldots,z_n,\bar z_1,\ldots,\bar z_n]$, i.e. when $A(z,\bar z)$ is real valued, the problem about whether or not $A(z,\bar z)\|z\|^2$ can be written as a sum of norm squares of polynomials in $z$ has attracted considerable interest. Sometimes it is called the \textit{sums of squares (SOS) problem}. The problem is partly motivated by the Hermitian analogues of Hilbert's 17th problem and the study of proper holomorphic maps between the complex unit balls.  Readers may see references~\cite{Da}, \cite{CD1}, \cite{CD2} for these connections.

More recently, a related problem regarding the \textit{rank} of $A(z,\bar z)\|z\|^2$ has been posed by Ebenfelt~\cite{Eb}. Suppose $A(z,\bar z)\|z\|^2$ can be written as a sum of norm squares of polynomials, the rank (or linear rank) of $A(z,\bar z)\|z\|^2$ is defined to be the dimension of the vector space spanned by these polynomials, or equivalently, the minimum number of such polynomials. Motivated by the \textit{Gap Conjecture} on the rational proper maps between the complex unit balls, Ebenfelt proposed a conjecture regarding the possible ranks of $A(z,\bar z)\|z\|^2$ when $A$ varies. (For the detail of the Gap Conjecture, we refer to reader to the original article of Huang-Ji-Yin~\cite{HJY}.) Using a CR version of the Gauss equation, Ebenfelt proved that the Gap Conjecture is a consequence of the following conjecture on the rank of $A(z,\bar z)\|z\|^2$, which he named the \textit{SOS Conjecture}:

\begin{conjecture}[\cite{Eb}]\label{sos conjecture}
Let $n\geq 2$ and $k_0$ be the largest integer $k$ such that $n>k(k+1)/2$. Let $R$ be the rank of $A(z,\bar z)\|z\|^2$, where $z\in\mathbb C^n$, then either
$$
R\geq (k_0+1)n-\dfrac{k_0(k_0+1)}{2}-1,
$$
or there exists $k\in\{1,\ldots, k_0\}$ such that
$$
kn-\dfrac{k(k-1)}{2}\leq R\leq kn.
$$
\end{conjecture}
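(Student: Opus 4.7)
The plan is to fix a polynomial SOS representation $A(z,\bar z)\|z\|^2 = \sum_{i=1}^R |f_i(z)|^2$ with linearly independent $f_i$, assemble $F = (f_1,\ldots,f_R)\colon \mathbb C^n \to \mathbb C^R$, and rule out $R$ from the open gaps $\big(kn,\, (k+1)n - \tfrac{k(k+1)}{2}\big)$ for $k = 1,\ldots,k_0$. The main identity I would exploit is the polarization $\langle F(z), F(w)\rangle = \tilde A(z,\bar w)\,\langle z,\bar w\rangle$, a formal polynomial identity in $(z,\bar w)$ that forces $F$ and its partial derivatives to obey strong vanishing conditions along the complex quadric $\{\langle z,\bar w\rangle = 0\}$. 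This is exactly the setting in which the local-orthogonal-map techniques cited in the abstract operate, placing the SOS problem inside the broader framework of polynomial maps respecting a Hermitian form.

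I would first reduce to a bihomogeneous situation: decomposing $A$ by bidegrees, each bidegree contributes an independent block of $F$ (distinct bidegree monomials are orthogonal in the Fischer inner product), and the rank is additive across blocks, so one may assume $A$ is bihomogeneous of some bidegree $(d,d)$ and $F$ is holomorphic of degree $d+1$. I would then run an induction on $k \in \{1,\ldots,k_0\}$. Assume the inductive step has been established for all $k' < k$, and suppose for contradiction that $R \in \big(kn,\, (k+1)n - \tfrac{k(k+1)}{2}\big)$. Differentiating the polarized identity and evaluating on $\{\langle z,\bar w\rangle = 0\}$ should yield linear dependencies among the partial derivatives of $F$ at generic points. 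Together with the CR Gauss-equation input of Ebenfelt, these dependencies ought to force $F$ into a normal form built from the Veronese-type map $z \mapsto z^{\otimes k}$ composed with a linear embedding into $\mathbb C^R$; such a normal form has rank either at most $kn$ or at least $(k+1)n - \tfrac{k(k+1)}{2}$, producing the required contradiction.

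The decisive obstacle is this rigidity step, and the difficulty is real: what one actually needs is that \emph{every} polynomial map $F$ satisfying the polarized identity with $R$ in the forbidden interval lies in the normal-form class of rank $\leq kn$. This is exactly the content where the SOS Conjecture lives, and it is the reason the conjecture has remained open in the Euclidean case. In the signature setting treated by the paper, the basic polarized identity still holds but the positivity that usually anchors the rigidity is lost (one now has $\sum|f_i|^2 - \sum|g_j|^2 = A(z,\bar z)\|z\|^2_\star$), so I would expect the rigidity to succeed only for a finite initial range of $k$, yielding the finite list of gaps the paper describes rather than settling the full conjecture.
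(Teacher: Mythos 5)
The statement you were asked to prove is a \emph{conjecture} quoted from Ebenfelt~[Eb], and the paper does not prove it; what the paper establishes is the qualitatively similar but quantitatively weaker Corollary~\ref{sos cor} (upper bound $\kappa n + 2\kappa$ instead of $\kappa n$, and a smaller $\kappa_0$), obtained as a special case of Theorem~\ref{general thm}. Your proposal also does not prove the conjecture, and you say so yourself: the Veronese-rigidity step --- forcing $F$ into a normal form built from $z\mapsto z^{\otimes k}$ so that the rank lands either below $kn$ or above $(k+1)n - k(k+1)/2$ --- is exactly the open problem, and nothing in your sketch (differentiating the polarized identity on the quadric, invoking the CR Gauss equation) gets you that rigidity. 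So the gap is real, and it is the whole conjecture.

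What is worth noting is that your route is genuinely different from what the paper does to get its partial result, and the difference is instructive. You aim for a \emph{strong} statement (a normal form for $F$) and then read off rank constraints from it; the paper never attempts any classification of $F$. Instead it homogenizes and polarizes to view the data as an orthogonal rational map $F\colon\mathbb P^n\dashrightarrow\mathbb P^{R-1}$, then plays two coarse dimension estimates against each other: (a) a lower bound $D_m \geq N(m;a,b)$ on the dimension of the linear span of $F$ restricted to a general $m$-plane, coming from the hyperplane restriction theorem (Theorem~\ref{dim thm}, a Macaulay/Green-type inequality), and (b) an upper bound $D_{m_1}+D_{m_2}\leq N-1$ whenever $m_1+m_2\leq r+s-2$, coming purely from the orthogonality $F(M_1)\perp F(M_2)$ (Proposition~\ref{contradict}). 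Choosing $m_1,m_2$ near $(r+s)/2$ and comparing gives a numerical contradiction whenever $R-1$ sits in an interval of the form $[(a+1)(n+t+1),\ (a+2)(n-a-t)-2]$ (Theorem~\ref{gap thm}). This trades the sharp bound $kn$ for the slightly larger $kn + k(2+\tau)$, but it completely avoids the rigidity obstruction. Two further points in your sketch deserve care: the reduction to the bihomogeneous case by splitting $A$ into bidegree blocks and claiming the rank is additive across blocks is not justified (the paper instead homogenizes by introducing a new variable, which is clean but forces a degenerate Hermitian form on the source, i.e.\ increases $\tau$ by one --- this is why the final bounds involve $\tau$ at all); and in the indefinite/degenerate setting the contradiction in the paper does not come from ``losing positivity'' but from orthogonality becoming a weaker constraint because not every hyperplane is an orthogonal complement, which is handled by restricting to non-degenerate subspaces in the proof of Proposition~\ref{contradict}.
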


There have been a number of articles providing various sorts of evidence for the SOS conjecture, including the case of monomials in $\mathbb C^3$ by Brooks-Grundmeier~\cite{BG}, the cases where $A(z,\bar z)$ itself is a sum of squares by Grundmeier-Halfpap~\cite{GH} or is of certain special signatures by Brooks-Grundmeier-Schenck~\cite{BGS}. 

In this article, we will study the SOS conjecture and its generalizations. We start by remarking that the diagonalizability of Hermitian matrices implies that even in the cases where $A(z,\bar z)\|z\|^2$ cannot be written as a sum of norm squares of polynomials, it can always be written as a difference of two sums of squares
\begin{equation}\label{intro eq}
A(z,\bar z)\|z\|^2=|P^+_1|^2+\cdots+|P^+_p|^2-|P^-_1|^2-\cdots-|P^-_q|^2,
\end{equation}
where $P^+_1,\ldots,P^+_p,P^-_1,\ldots,P^-_q$ are polynomials in $z$. In addition, the dimension of the vector space spanned by these polynomials is uniquely determined by the function $A(z,\bar z)\|z\|^2$ and will be defined to be the rank of $A(z,\bar z)\|z\|^2$. In fact, the notion of rank is similarly defined for any Hermitian polynomial. Therefore, one may also study a more general rank problem in which $\|z\|^2=\dotproductz$ is replaced by other Hermitian forms of any signatures. More concretely, we want to study the possible ranks of the functions of the form $A(z,\bar z)\|z\|_\star^2$, where $\|z\|_\star^2:=\dotproductz_\star$ and $\dotproduct_\star$ is a Hermitian form on $\mathbb C^n$ which can be indefinite or degenerate. One may call it the \textit{Generalized SOS problem}.

The starting point of our approach to the Generalized SOS problem is to homogenize and polarize Eq.(\ref{intro eq}), which then gives us an \textit{orthogonal rational map} $F$ (Definition~\ref{orthogonal rational map}) between two projective spaces. Roughly speaking, it means $F$ is a rational map preserving certain orthogonality defined on the projective spaces using Hermitian forms. Therefore, we are able to apply the techniques developed in~\cite{GN}.
The basic philosophy here is the same. That is, we use a hyperplane restriction theorem (Theorem~\ref{dim thm}) to get estimates for the dimensions of the linear spans of the images under $F$ of linear subspaces in every dimension. These estimates will then be combined with the dimension restrictions given by orthogonality to show that the dimension of the linear span of the image of $F$, which is just the rank of $A(z,\bar z)\|z\|_\star^2$, cannot lie in certain intervals. However, in contrast to the non-degenerate situation handled in~\cite{GN}, we will allow degeneracy in the Hermitian forms here. This is not just a matter of generalization but is actually something one cannot avoid. The reason is that even if we start with non-degenerate Hermitian forms, degeneracy will always show up in the end as the new variable introduced during the homogenization of Eq.(\ref{intro eq}) will not appear in the Hermitian form of the source projective space. The degeneracy of the Hermitian form will cause a number of new difficulties, mainly due to the fact that a general hyperplane is not an orthogonal complement anymore.

Our main result is as follows: 

\begin{theorem}\label{general thm}
Let $\dotproduct_\star$ be a Hermitian form on $\mathbb C^n$ of an arbitrary signature and $\tau$ be the multiplicity of its zero eigenvalue.
Let $k_0$ be the largest $k\in\mathbb N$ such that $n\geq k^2+k(3+2\tau)+2+\tau$. For a Hermitian polynomial $A(z,\bar z)\in\mathbb C[z_1,\ldots,z_n,\bar z_1,\ldots,\bar z_n]$, where $z=(z_1,\ldots,z_{n})$, let $R$ be the rank of $A(z,\bar z)\|z\|_\star^2$. Then, either
$$
R\geq (k_0+1)n-(k_0+1)(k_0+\tau)
$$
or there exists $k\in\{1,\ldots,k_0\}$ such that
$$
kn-k(k-1+\tau)\leq R\leq kn+k(2+\tau)
$$
\end{theorem}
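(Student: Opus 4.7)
The plan is to follow the strategy outlined in the introduction: convert the generalized SOS decomposition into a geometric statement about an orthogonal rational map, and then play the dimension restriction theorem against the constraints coming from orthogonality. Starting from a minimal decomposition
$$A(z,\bar z)\|z\|_\star^2=\sum_{i=1}^p|P_i^+(z)|^2-\sum_{j=1}^q|P_j^-(z)|^2$$
with $p+q=R$, I would first homogenize by introducing a new variable $z_0$ to produce bihomogeneous $\tilde A$ and $\tilde P_i^{\pm}$ of a common degree in $(z_0,z_1,\ldots,z_n)$, and then polarize the resulting identity. This yields an orthogonal rational map $F:\mathbb P^n\dashrightarrow\mathbb P^{R-1}$ in the sense of Definition~\ref{orthogonal rational map}. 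The source $\mathbb P^n$ carries the form $\langle\cdot,\cdot\rangle_\star$ extended by zero in the $z_0$-direction, so its null locus is a $\tau$-dimensional linear subspace $N\subset\mathbb P^n$, while the target carries a non-degenerate Hermitian form $\langle\cdot,\cdot\rangle_\#$ of signature $(p,q)$.

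For a linear subspace $L\subset\mathbb P^n$, let $D(L)$ denote the dimension of the linear span of $F(L)$. Applying Theorem~\ref{dim thm} iteratively along a sufficiently general flag of linear subspaces in the source would give lower bounds of the form $D(L_k)\geq\phi(R-1,n,k)$ whenever $L_k$ has codimension $k$, where $\phi$ is the quantitative bound furnished by the hyperplane restriction theorem.

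Next I would exploit the orthogonality of $F$ to produce matching upper bounds. For generic points $X_1,\ldots,X_j\in\mathbb P^n\setminus N$, the subspaces $L_j:=X_1^{\perp_\star}\cap\cdots\cap X_j^{\perp_\star}$ have codimension $j$ in $\mathbb P^n$, and by the orthogonality of $F$ one has $F(L_j)\subset\bigcap_{i=1}^j F(X_i)^{\perp_\#}$; the latter has dimension $R-1-j$ provided the $F(X_i)$ are linearly independent and non-isotropic, a genericity condition made available by the inequality $n\geq k^2+k(3+2\tau)+2+\tau$ defining $k_0$. Hence $D(L_j)\leq R-1-j$ for $j=1,\ldots,k_0+1$. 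Comparing this with the lower bound $\phi(R-1,n,j)$ produces a system of inequalities on $R$; a routine arithmetic analysis should then show that the only admissible values of $R$ are either at least $(k_0+1)n-(k_0+1)(k_0+\tau)$ or inside one of the intervals $[kn-k(k-1+\tau),\,kn+k(2+\tau)]$ for some $k\in\{1,\ldots,k_0\}$.

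The main obstacle is that in the degenerate setting the orthogonal complements $X_i^{\perp_\star}$ are not generic hyperplanes: each contains the $\tau$-dimensional null space $N\subset\mathbb P^n$, so the $L_j$ form a highly special flag. This is precisely what forces the $\tau$-corrections throughout the statement, and it means that Theorem~\ref{dim thm} cannot be applied to a generic flag in the source. I expect that overcoming this requires either strengthening Theorem~\ref{dim thm} to handle flags of hyperplanes that all contain a prescribed linear subspace, or combining it with a quotient argument that keeps track of how the original $\tau$ null directions of $\|z\|_\star^2$ and the auxiliary $z_0$-direction contribute asymmetrically to $F$ (the map does depend on $z_0$ through the homogenization, but is insensitive to the original null directions in the same way $\|z\|_\star^2$ is).
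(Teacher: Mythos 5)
Your high-level plan — homogenize, polarize to produce an orthogonal rational map $F:\mathbb P^n\dashrightarrow\mathbb P^{R-1}$, and play orthogonality against the hyperplane restriction lower bounds — matches the paper's strategy. But the crucial step, how one extracts the upper bounds from orthogonality, is substantively different from the paper's, and your version has a gap that you yourself flag but do not resolve.

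You propose to take generic points $X_1,\ldots,X_j$, form $L_j:=\bigcap_i X_i^{\perp_\star}$, and bound $D(L_j)\leq R-1-j$ by arguing the $F(X_i)$ span a non-degenerate $j$-plane. Two problems. First, as you observe, when the source form is degenerate every $X_i^{\perp_\star}$ contains the projectivized null space, so $L_j$ is never a general codimension-$j$ subspace, and Theorem~\ref{dim thm} (and its iterate, Proposition~\ref{dim prop}) only give lower bounds for \emph{general} linear subspaces. Your suggested remedies — a version of the restriction theorem adapted to flags through a fixed subspace, or a quotient construction — are not what the paper does and are not obviously available. Second, the inequality $D(L_j)\leq R-1-j$ requires the span of $F(X_1),\ldots,F(X_j)$ to be a non-degenerate $j$-plane for $\langle\cdot,\cdot\rangle_{p,q}$; this genericity is asserted but never established, and it is not a consequence of the numerical condition $n\geq k^2+k(3+2\tau)+2+\tau$.

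The paper sidesteps both problems with a different combinatorial mechanism. Rather than iteratively intersecting hyperplanes along a flag, it picks a \emph{pair} of mutually $\perp_{r,s,t}$-orthogonal linear subspaces $M_1,M_2\subset\mathbb P^n$ of projective dimensions $n_1,n_2$ with $n_1+n_2=r+s-2$ (roughly half-and-half). Proposition~\ref{contradict} shows such a pair can be chosen \emph{generically} — here the key point is that one only needs the lifted subspaces to be non-degenerate for $\langle\cdot,\cdot\rangle_{r,s,t}$, and non-degeneracy is an open condition that can be combined with a perturbation to dodge the bad loci $Z_{m_i}$. Orthogonality of $F$ then forces $\mathrm{span}(F(M_1))\perp_{p,q}\mathrm{span}(F(M_2))$, hence $D_{n_1}+D_{n_2}\leq N-1$, with no need for any statement about non-degeneracy of the span of image points. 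Against this one runs the lower bounds on $D_{n_1}$ and $D_{n_2}$ from Proposition~\ref{dim prop}, which applies because $M_1,M_2$ \emph{are} general of their respective dimensions. This two-subspace argument is the missing key idea, and without it the proposal does not close. (Structurally the paper also first proves the bihomogeneous case, Theorem~\ref{homo thm}, then deduces the general theorem by the $\tau\mapsto\tau+1$, $n\mapsto n+1$ homogenization shift; this is minor bookkeeping compared to the gap above.)
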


We remark that Theorem~\ref{general thm} demonstrates that in fact Conjecture~\ref{sos conjecture} qualitatively holds in \textit{all} signatures except the trivial ones, i.e. except those with $\tau\geq n-1$.

When $\tau=0$, the Hermitian form $\langle\cdot,\cdot\rangle_\star$ is non-degenerate, it is a situation which in particular includes the Euclidean case. Theorem~\ref{general thm} thereby gives the following corollary which contains a partial confirmation of Conjecture~\ref{sos conjecture}.

\begin{corollary}\label{sos cor}
Follow the notations in Theorem~\ref{general thm}, suppose $n\geq 2$ and $\dotproduct_\star$ is non-degenerate. Let $\kappa_0$ be the largest $\kappa\in\mathbb N$ such that $n\geq (\kappa+2)(\kappa+1)$. Then, either
$$
R\geq (\kappa_0+1)n-(\kappa_0+1)\kappa_0
$$
or there exists $\kappa\in\{1,\ldots,\kappa_0\}$ such that
$$
\kappa n-\kappa(\kappa-1)\leq R\leq \kappa n+2\kappa.
$$
\end{corollary}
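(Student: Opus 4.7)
The plan is to obtain Corollary~\ref{sos cor} as an immediate specialization of Theorem~\ref{general thm} to the non-degenerate case. Since $\langle\cdot,\cdot\rangle_\star$ is assumed to be non-degenerate, the multiplicity $\tau$ of its zero eigenvalue is $0$. So my entire strategy is to substitute $\tau=0$ throughout the statement of Theorem~\ref{general thm} and verify that, after a relabeling of the index, the resulting assertion coincides with Corollary~\ref{sos cor}. All three quantities that appear in the alternative (the index range, the global lower bound, and the forbidden interval) need to be checked.

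First I would match the indices. Setting $\tau=0$ in the defining inequality $n\geq k^{2}+k(3+2\tau)+2+\tau$ gives $n\geq k^{2}+3k+2=(k+1)(k+2)$, which is exactly the condition $n\geq(\kappa+2)(\kappa+1)$ used in the corollary to define $\kappa_0$. Hence the constant $k_0$ produced by Theorem~\ref{general thm} agrees with $\kappa_0$. Next I would read off the two branches of the alternative. The global lower bound becomes
$$
R\geq (k_0+1)n-(k_0+1)(k_0+0)=(\kappa_0+1)n-(\kappa_0+1)\kappa_0,
$$
matching the first alternative in the corollary. For each $k\in\{1,\ldots,k_0\}=\{1,\ldots,\kappa_0\}$, the forbidden interval
$$
kn-k(k-1+\tau)\leq R\leq kn+k(2+\tau)
$$
collapses to $kn-k(k-1)\leq R\leq kn+2k$, again exactly what is stated in the corollary.

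Finally I would note why the hypothesis $n\geq 2$ appears. In the non-degenerate case $\tau=0$, the condition $n\geq 2$ is equivalent to $\tau<n-1$, which guarantees that $\langle\cdot,\cdot\rangle_\star$ has at least two non-zero eigenvalues and thereby puts us in the non-trivial regime of Theorem~\ref{general thm} identified in the remark immediately following it. Since the entire argument is a purely formal substitution $\tau=0$ together with the relabeling $k_0\leftrightarrow\kappa_0$, I do not expect any genuine obstacle: the analytic and combinatorial content is already packaged inside Theorem~\ref{general thm}, and the only task here is careful bookkeeping of the arithmetic.
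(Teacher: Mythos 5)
Your proposal is correct and matches the paper's (implicit) derivation: Corollary~\ref{sos cor} is indeed obtained by setting $\tau=0$ in Theorem~\ref{general thm} and relabeling $k_0$ as $\kappa_0$, and all three arithmetic checks you perform are accurate.
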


\noindent\textbf{Remark.} If $A(z,\bar z)$ is bihomogeneous, then it will follow directly from Theorem~\ref{homo thm} that the last inequality in Corollary~\ref{sos cor} can be improved to
$$
\kappa n-\kappa(\kappa-1)\leq R\leq \kappa n.
$$


\section{Orthogonality and Generalized SOS problem}

We first bring up the following hyperplane restriction theorem for the local holomorphic maps between projective spaces established in~\cite{GN}, which is very useful for the mapping problems between real hyperquadrics. It is inspired by an analogous theorem of Green for $H^0(\mathcal O_{\mathbb P^n}(d))$, $d\geq 1$. To state the theorem, let us recall the Macaulay representations of a positive integer. Fix $A\in\mathbb N^+$. For every $n\in\mathbb N^+$, there exist unique positive integers $a_n>a_{n-1}>\cdots >a_\delta$, where $\delta\geq 1$ and $a_j\geq j$ for every $j$, such that $A=\binom{a_n}{n}+\cdots+\binom{a_\delta}{\delta}$. This is called the \textit{$n$-th Macaulay representation of $A$}. Using this, we define $A^{-<n>}:=\binom{a_n-1}{n-1}+\cdots+\binom{a_\delta-1}{\delta-1}$, in which we adopt the convention that $\binom{a}{b}=0$ if $a<b$ or $b=0$. 

\begin{theorem}[\cite{GN}]\label{dim thm}
Let $f:U\subset\mathbb P^n\rightarrow\mathbb P^M$ be a local holomorphic map such that 
$\dim(\mathrm{span}(f(U))\geq N$ for some positive integer $N\leq M$, where ``span'' means the projective linear span. Then, for a general hyperplane $\Pi$ such that $\Pi\cap U\neq\varnothing$, $\dim(\mathrm{span}(f(\Pi\cap U))\geq N^{-<n>}$.
\end{theorem}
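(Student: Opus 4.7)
The plan is to reduce Theorem~\ref{dim thm} to a hyperplane-restriction inequality for graded subspaces of a polynomial ring, in the spirit of Macaulay's theorem on Hilbert functions and Green's hyperplane-restriction theorem. Fix a point $p_0 \in U \cap \Pi$ and work in an affine chart $\mathbb{C}^n \subset \mathbb{P}^n$ centered at $p_0$. Choosing a local frame, write $f = [f_0:\cdots:f_M]$ with $f_j$ holomorphic near $p_0$; the spanning hypothesis says that $V := \mathrm{span}_{\mathbb{C}}\{f_0,\ldots,f_M\}$ has $\dim V \geq N+1$, and after passing to a subspace one may assume equality. For $d$ sufficiently large, the $d$-th Taylor-polynomial map $V \hookrightarrow \mathbb{C}[z_1,\ldots,z_n]_{\leq d}$ is injective, and homogenizing with a new variable $x_0$ embeds $V$ inside the graded piece $S_d := H^0(\mathbb{P}^n,\mathcal{O}(d))$.

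Under this dictionary, after shifting coordinates so that $p_0 \in \Pi$, the hyperplane $\Pi$ corresponds to a linear form $\ell \in S_1$, and restricting $f$ to $\Pi \cap U$ corresponds to taking the image $V_\ell$ of $V$ under the quotient $S_d \twoheadrightarrow (S/\ell S)_d$; one has $\dim V_\ell = \dim \mathrm{span}(f(\Pi\cap U)) + 1$. Theorem~\ref{dim thm} therefore reduces to the purely algebraic assertion
$$
\dim V_\ell - 1 \;\geq\; (\dim V - 1)^{-<n>}
$$
for every subspace $V \subset S_d$ and generic $\ell \in S_1$.

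The main obstacle is precisely this algebraic inequality, because the classical Macaulay and Green theorems phrase their conclusions via the Macaulay representation indexed by the degree $d$, whereas here we must use the one indexed by the ambient dimension $n$. I would attack it by first taking $d \gg 0$ so that the Hilbert function of the ideal generated by $V$ is controlled by its Gotzmann asymptotic form; then iterating hyperplane restriction along a generic flag $\mathbb{P}^n \supset \Pi_1 \supset \Pi_2 \supset \cdots$, tracking at each step how the $n$-th Macaulay expansion of $\dim V$ peels off term by term via the Pascal identity $\binom{a}{k}=\binom{a-1}{k}+\binom{a-1}{k-1}$, and finally reducing to a one-variable situation where the bound is elementary. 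A useful sanity check is that $d$ does not appear in the final bound, as required for an assertion about the local holomorphic map $f$ in which $d$ was only an auxiliary parameter.
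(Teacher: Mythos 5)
This statement is cited verbatim from~\cite{GN} (arXiv:2110.07673); the present paper does not contain a proof of it, so there is no ``paper's own proof'' to compare against. What I can assess is whether your proposal would constitute a proof.

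Your reduction from the analytic statement to a statement about a linear subspace $V\subset S_d=H^0(\mathbb{P}^n,\mathcal{O}(d))$ and its image under a generic hyperplane restriction is the natural first move and is essentially sound, modulo bookkeeping: one needs to center the affine chart at a point of $\Pi\cap U$, note that degree-$d$ Taylor truncation commutes with restriction to hyperplanes through the base point, and check that a single $d$ can be chosen large enough that the truncation is injective both on $V$ and on $V|_\Pi$ for a generic $\Pi$ (the latter is an open condition on $\Pi$, so this is fine). The passage ``after passing to a subspace one may assume equality'' is also legitimate, since replacing $V$ by a subspace only shrinks $V_\ell$ and hence weakens the conclusion in the correct direction.

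The genuine gap is that you never prove the algebraic inequality
$\dim V_\ell - 1 \geq (\dim V - 1)^{-<n>}$, which you yourself identify as ``the main obstacle.'' The paragraph that follows is not a proof but a list of tools you ``would attack it'' with (Gotzmann asymptotics, iterated restriction along a generic flag, peeling the $n$-indexed Macaulay expansion via Pascal's identity). None of these steps is carried out, and it is not clear that they combine into an argument: Gotzmann regularity controls the Hilbert function of the ideal generated by $V$ in degrees $\geq d$, whereas the inequality you need concerns the single graded piece $V\subset S_d$ under one generic restriction, and the $n$-indexed operator $\cdot^{-<n>}$ does not arise in the Macaulay--Gotzmann circle of ideas as stated (those are indexed by the degree $d$). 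What is actually needed here is a concrete bridge between Green's hyperplane restriction theorem (which bounds $\mathrm{codim}\,V_\ell$ in $(S/\ell S)_d$ via the $d$-indexed operation applied to $\mathrm{codim}\,V$) and the $n$-indexed bound on $\dim V_\ell$; establishing that bridge, or finding an independent argument, is precisely the content of the theorem and is missing. As it stands, the proposal correctly frames the problem and reduces it to the right algebraic target, but stops exactly where the real work begins.
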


To match better the notations employed in~\cite{GN}, in what follows we will work on $\mathbb C^{n+1}$ instead of $\mathbb C^n$.

Let $r,s,t\in\{0,\ldots,n+1\}$ such that $(r,s)\neq (0,0)$ and $r+s+t=n+1$. Consider the possibly indefinite or degenerate Hermitian form on $\mathbb C^{n+1}$ defined by
$$
\langle z,w\rangle_{r,s,t}:=z_1\bar w_1+\cdots+z_r\bar w_r
-z_{r+1}\bar w_{r+1}-\cdots-z_{r+s}\bar w_{r+s},
$$
where $z,w\in\mathbb C^{n+1}$. Thus, $\dotproduct_{r,s,t}$ is of signature $(r,s,t)$. Denote $\dotproductz_{r,s,t}$ by $\|z\|^2_{r,s,t}$. If $t=0$, we will simply write $\dotproduct_{r,s}$ and $\|\cdot\|_{r,s}^2$.

Let $B(z,\bar z)\in\mathbb C[z_1,\ldots,z_{n+1},\bar z_1,\ldots,\bar z_{n+1}]$ be a non-zero Hermitian \textit{bihomogeneous} polynomial, where $z=(z_1,\ldots,z_{n+1})$. Using the diagonalizability of Hermitian matrices, we can write
\begin{equation}\label{decomposition}
	B(z,\bar z)\|z\|^2_{r,s,t}=|h^+_1|^2+\cdots+|h^+_p|^2-|h^-_1|^2-\cdots-|h^-_q|^2
\end{equation}
for some linearly independent homogeneous polynomials $h^+_1,\ldots,h^+_p,h^-_1,\ldots,h^-_q$ of the same degree. These polynomials are uniquely determined by $B(z,\bar z)\|z\|^2_{r,s,t}$ up to the action of the generalized unitary group $U(p,q)$ (see~\cite{GN2} for a proof). In particular, the sum $R:=p+q$ is well defined and will be called the \textit{rank} of $B(z,\bar z)\|z\|^2_{r,s,t}$.

Now fix $r,s,t$. To study the possible ranks of $B(z,\bar z)\|z\|^2_{r,s,t}$ for a varying $B$, we use Eq.(\ref{decomposition}) to define a polynomial map $\tilde F:\mathbb C^{n+1}\rightarrow\mathbb C^R$:
$$
\tilde F(z_1,\ldots,z_{n+1}):=(h^+_1,\ldots,h^+_p,h^-_1,\ldots,h^-_q).
$$
If we polarize Eq.(\ref{decomposition}), we get
\begin{equation}\label{polarized eq}
B(z,\bar w)\langle z,w\rangle_{r,s,t}=\langle \tilde F(z),\tilde F(w)\rangle_{p,q},
\end{equation}
from which we deduce that $\langle \tilde F(z),\tilde F(w)\rangle_{p,q}=0$ for every $z,w$ such that $\langle z,w\rangle_{r,s,t}=0$. Since $\tilde F$ is given by homogeneous polynomials, it can also be regarded as a rational map $F:\mathbb P^n\dashrightarrow\mathbb P^{R-1}$. Note that although the Hermitian forms themselves are no longer defined on the projective spaces, the associated orthogonality still makes sense. Such a map between two projective spaces preserving orthogonality has been studied in~\cite{GN1} and is called an \textit{orthogonal map}. For the purpose of this article, it suffices for us to use the following definition specialized for the present context:

\begin{definition}\label{orthogonal rational map}
Let $n,N\in\mathbb N^+$ and $r,s,t,p,q\in\mathbb N$ such that $r+s+t=n+1$ and $p+q=N+1$. For $z,w\in\mathbb P^n$, where $z=[z_1,\ldots,z_{n+1}]$ and $w=[w_1,\ldots,w_{n+1}]$, we write $z\perp_{r,s,t} w$ if $\langle z,w\rangle_{r,s,t}=0$ and similarly for $z'\perp_{p,q}w'$ when $z',w'\in\mathbb P^N$.
A rational map $F:\mathbb P^n\dashrightarrow\mathbb P^N$ is called an \textbf{orthogonal rational map} with respect to $(\perp_{r,s,t},\perp_{p,q})$ if $F(z)\perp_{p,q}F(w)$ for every $z,w$ in the domain of definition of $F$ such that $z\perp_{r,s,t}w$.
\end{definition}

We are now able to give a lower bound for the rank of $B(z,\bar z)\|z\|_{r,s,t}^2$ using orthogonality and Theorem~\ref{dim thm}:

\begin{proposition}\label{lower bound}
Let $z\in\mathbb C^{n+1}$ and $R$ be the rank of $B(z,\bar z)\|z\|^2_{r,s,t}$, where $B$ is non-zero and Hermitian bihomogeneous. Then $R\geq r+s$.
\end{proposition}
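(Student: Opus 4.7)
The plan is to sandwich $R$ using a finite-dimensional subspace $T$ of polynomials in $\bar w$. For each $z_0 \in \mathbb{C}^{n+1}$, set
$$P_{z_0}(\bar w) := B(z_0,\bar w)\,\langle z_0,w\rangle_{r,s,t},$$
and let $T\subseteq \mathbb{C}[\bar w_1,\dots,\bar w_{n+1}]$ be the $\mathbb{C}$-linear span of $\{P_{z_0}\}_{z_0 \in \mathbb{C}^{n+1}}$. Substituting a fixed $z_0$ into the polarized identity~(\ref{polarized eq}) gives
$$P_{z_0} = \sum_{i=1}^{p} h_i^+(z_0)\overline{h_i^+(w)} - \sum_{j=1}^{q} h_j^-(z_0)\overline{h_j^-(w)},$$
so $P_{z_0}$ lies in the $R$-dimensional $\mathbb{C}$-span of $\{\overline{h_i^+},\overline{h_j^-}\}$. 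Thus $\dim T \le R$, and it suffices to show $\dim T \ge r+s$.

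The key tool is to differentiate the polynomial-valued map $z_0 \mapsto P_{z_0}$, whose directional derivatives at any $z_*\in\mathbb{C}^{n+1}$ lie in $T$. Setting $h_0 := \langle z_*,w\rangle_{r,s,t}$ and $\ell_u := \sum_{i\le r+s}\epsilon_i u_i\bar w_i$ for $u \in \mathbb{C}^{n+1}$ (where $\epsilon_i \in \{\pm 1\}$ are the diagonal signs of $\langle\cdot,\cdot\rangle_{r,s,t}$), the product rule yields
$$\frac{d}{dt}P_{z_*+tu}\Big|_{t=0} = (\nabla_u B)(z_*,\bar w)\, h_0 + B(z_*,\bar w)\, \ell_u \in T.$$
When $B$ is not divisible by $\langle z,w\rangle_{r,s,t}$ in $\mathbb{C}[z,\bar w]$, a generic $z_*$ satisfies $B(z_*,\bar w) \not\equiv 0 \pmod{h_0}$. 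For $r+s \ge 2$ the form $\langle z,w\rangle_{r,s,t}$ is irreducible, so $(h_0)$ is a prime ideal and $\mathbb{C}[\bar w]/(h_0)$ is an integral domain; multiplication by the class of $B(z_*,\bar w)$ is therefore injective. As $u$ varies, $\ell_u$ ranges over all linear forms in $\bar w_1,\dots,\bar w_{r+s}$, whose classes modulo $h_0$ span an $(r+s-1)$-dimensional subspace. Combined with the non-zero element $P_{z_*} = B(z_*,\bar w)\,h_0 \in T\cap(h_0)$, this yields $\dim T \ge (r+s-1)+1 = r+s$.

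The main obstacle is the non-generic case $B = \langle z,w\rangle_{r,s,t}^{k}\tilde{B}$ with $k\ge 1$ and $\tilde{B}$ not divisible by $\langle z,w\rangle_{r,s,t}$ (well-defined by the UFD property of $\mathbb{C}[z,\bar w]$ and irreducibility of $\langle z,w\rangle_{r,s,t}$; note $\tilde B$ is again Hermitian). Here $P_{z_*}$ and its low-order derivatives lie in high powers of $(h_0)$, so the single-step mod-$h_0$ argument collapses. The fix is to analyze the filtration of $\mathbb{C}[\bar w]$ by powers of $(h_0)$: the $j$-th Taylor coefficient of $P_{z_*+tu}$ in $t$ lies in $(h_0)^{k+1-j}$, and its class in the graded piece $(h_0)^{k+1-j}/(h_0)^{k+2-j}$ identifies (after trivializing $h_0^{k+1-j}$) with a nonzero scalar multiple of $\tilde{B}(z_*,\bar w)\,\ell_u^{\,j}\bmod h_0$. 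By polarization, $\{\ell_u^{\,j}\bmod h_0 : u\}$ spans a subspace of $\mathbb{C}[\bar w]/(h_0)$ of dimension $\binom{r+s+j-2}{j}$. Summing these independent contributions across $j=0,\dots,k+1$ and applying the hockey-stick identity,
$$\dim T \ge \sum_{j=0}^{k+1} \binom{r+s+j-2}{j} = \binom{r+s+k}{k+1} \ge r+s.$$
Finally, the edge case $r+s = 1$ (where $\langle z,w\rangle_{r,s,t}$ is reducible) is immediate, since $B(z,\bar z)\,|z_1|^2 \not\equiv 0$ for any nonzero $B$ forces the rank to be at least $1 = r+s$.
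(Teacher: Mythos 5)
Your proof is correct, and it takes a genuinely different route from the paper's. The paper first reduces to the non-degenerate case $t=0$ by restricting to a general $(r+s)$-dimensional linear subspace of $\mathbb{C}^{n+1}$, then views $F$ as an orthogonal rational map $\mathbb{P}^{r+s-1}\dashrightarrow\mathbb{P}^{R-1}$, applies the hyperplane restriction theorem (Theorem~\ref{dim thm}) to a general hyperplane $\Pi$, and uses the fact that non-degeneracy makes $F$ send hyperplanes to hyperplanes to obtain $(R-1)^{-<n>}\le R-2$; it then observes that if $R-1\le r+s-2$ the Macaulay representation of $R-1$ consists entirely of terms $\binom{j}{j}$, forcing $(R-1)^{-<n>}=R-1$ and giving a contradiction. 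Your argument instead sidesteps Macaulay theory and the restriction theorem entirely: you bound the rank by the dimension of the span $T$ of the ``slices'' $P_{z_0}=B(z_0,\bar w)\langle z_0,w\rangle_{r,s,t}$, then lower-bound $\dim T$ by a filtration-by-$(h_0)$-powers count on the Taylor coefficients of $t\mapsto P_{z_*+tu}$, using irreducibility of $\langle z,w\rangle_{r,s,t}$ to make the graded pieces behave like multiplication in a polynomial ring in $r+s-1$ variables. This is more elementary and self-contained, handles degenerate $t>0$ directly (no reduction needed), and in fact yields the stronger bound $R\ge\binom{r+s+k}{k+1}$ when $\langle z,w\rangle_{r,s,t}^k\mid B$. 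What the paper's approach buys is uniformity: the same Macaulay/hyperplane-restriction machinery is reused throughout (Propositions~\ref{dim prop},~\ref{contradict}, Theorem~\ref{gap thm}), so the paper's proof of this proposition is a short special case of the technology already set up.

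Two small points you should make explicit. First, the claim that a generic $z_*$ has $B(z_*,\bar w)\not\equiv 0\pmod{h_0}$ does need an argument: it follows because the hypersurface $\{\langle z,w\rangle_{r,s,t}=0\}\subset\mathbb C^{n+1}\times\mathbb C^{n+1}$ is irreducible for $r+s\ge 2$, so $\{B=0\}$ meets it in a subvariety of strictly smaller dimension, and a fiber-dimension count over the $z$-projection gives the genericity. Second, the inequality $\dim T\ge\sum_{j=0}^{k+1}\dim\bigl((T\cap(h_0)^{k+1-j})/(T\cap(h_0)^{k+2-j})\bigr)$ that you implicitly use should be stated, since the ``independence across $j$'' is exactly the exactness of the filtration. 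Neither is a gap in the idea, only in the exposition.
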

\begin{proof}
We first claim that it suffices to prove the proposition for the case $t=0$. To see this, instead of a rational map between projective spaces, we for the moment regard the map $F$ induced by $B(z,\bar z)\|z\|_{r,s,t}^2$ as a map from $\mathbb C^{n+1}$ to $\mathbb C^R$. Note that the restriction of $\langle\cdot,\cdot\rangle_{r,s,t}$ on a general $(r+s)$-dimensional linear subspace of $\mathbb C^{n+1}$ is of signature $(r,s,0)$ and thus by restricting everything to a general $(r+s)$-dimensional linear subspace and performing a linear change of coordinates, we get a non-zero polynomial map $F^\flat$ from $\mathbb C^{r+s}$ to $\mathbb C^R$ which satisfies the polarized equation
$
B^\flat(z,\bar w)\langle z,w\rangle_{r,s}=\langle F^\flat(z), F^\flat(w)\rangle_{p,q}
$
(as Eq.(\ref{polarized eq}) above) for some Hermitian bihomogeneous polynomial $B^\flat(z,\bar w)\in\mathbb C[z_1,\ldots,z_{r+s},\bar w_1,\ldots,\bar w_{r+s}]$. Since $F^\flat$ is obtained from $F$ by restriction and a change of domain coordinates, it follows that the rank of $B^\flat(z,\bar z)\|z\|_{r,s}^2$ is not greater than that of $B(z,\bar z)\|z\|^2_{r,s,t}$ and thus it suffices to show that the rank of $B^\flat(z,\bar z)\|z\|_{r,s}^2$ is at least $r+s$.

Thus, from now on we can assume that $t=0$. As before, since $B\neq 0$, we get from $B(z,\bar z)\|z\|_{r,s}^2$ an orthogonal rational map $F:\mathbb P^n\dashrightarrow\mathbb P^{R-1}$ with respect to $(\perp_{r,s},\perp_{p,q})$ for some $p+q=R$, and the image of $F$ is not contained in any hyperplane. Now Theorem~\ref{dim thm} implies that 
\begin{equation}\label{first estimate}
\dim(\mathrm{span}(F(\Pi))\geq (R-1)^{-<n>}
\end{equation}
for a general hyperplane $\Pi\subset\mathbb P^n$. On the other hand, since $\dotproduct_{r,s}$ is non-degenerate, it follows that every hyperplane in $\mathbb P^n$ is orthogonal to a point in $\mathbb P^n$. Consequently, from the polarized equation~(\ref{polarized eq}), we see that $F$ maps hyperplanes to hyperplanes. Thus, combining with (\ref{first estimate}) we get
\begin{equation}\label{second estimate}
(R-1)^{-<n>}\leq R-2.
\end{equation}
If $R<r+s$, then 
$$
R-1\leq r+s-2=n-1
$$
and hence the $n$-th Macaulay representation of $R-1$ is $\binom{n}{n}+\cdots\binom{\delta}{\delta}$ for some $\delta\geq 2$. Now it follows from the definition of the operator $\cdot^{-<n>}$ that 
$$
(R-1)^{-<n>}=R-1,
$$
which contradicts (\ref{second estimate}). Therefore, we must have $R\geq r+s$.
\end{proof}

\begin{proposition}\label{contradict}
Let $n,N\in\mathbb N^+$ and $r,s,t,p,q\in\mathbb N$ such that $r+s+t=n+1$ and $p+q=N+1$. Let $F:\mathbb P^n\dashrightarrow\mathbb P^N$ be an orthogonal rational map with respect to $(\perp_{r,s,t},\perp_{p,q})$ and $D_m:=\dim(\mathrm{span}(F(M)))$ for a general $m$-dimensional linear subspace $M\subset\mathbb P^n$. Then, for any $m_1,m_2\in\mathbb N$ such that $m_1+m_2\leq r+s-2$, we have
$$
D_{m_1}+D_{m_2}\leq N-1.
$$
\end{proposition}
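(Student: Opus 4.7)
The plan is to use the orthogonality of $F$ to transport orthogonality of a pair of linear subspaces in $\mathbb{P}^n$ (under $\perp_{r,s,t}$) to orthogonality of their image spans in $\mathbb{P}^N$ (under $\perp_{p,q}$), and then to exploit non-degeneracy of $\langle\cdot,\cdot\rangle_{p,q}$ to obtain the dimension bound. The substantive step is a genericity argument that realizes the generic values $D_{m_1}, D_{m_2}$ simultaneously on a mutually orthogonal pair.

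First I would record the transport step. If $M_1, M_2 \subset \mathbb{P}^n$ are linear subspaces with $M_1 \perp_{r,s,t} M_2$ (every point of $M_1$ orthogonal to every point of $M_2$), then by Definition~\ref{orthogonal rational map} and bilinearity of $\langle\cdot,\cdot\rangle_{p,q}$, the spans $\mathrm{span}(F(M_1))$ and $\mathrm{span}(F(M_2))$ are orthogonal in $\mathbb{P}^N$ with respect to $\perp_{p,q}$. Since $\langle\cdot,\cdot\rangle_{p,q}$ is non-degenerate on $\mathbb{C}^{N+1}$, the projective orthogonal complement of a $d$-dimensional linear subspace has projective dimension $N-1-d$, so orthogonality forces $\dim\mathrm{span}(F(M_1)) + \dim\mathrm{span}(F(M_2)) \leq N - 1$. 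It therefore suffices to exhibit an orthogonal pair $(M_1, M_2)$ with $\dim M_i = m_i$ and $\dim\mathrm{span}(F(M_i)) = D_{m_i}$.

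For this, let $\mathcal{G}_i = \mathrm{Gr}(m_i+1, n+1)$ and let $U_i \subset \mathcal{G}_i$ be the nonempty Zariski-open subset on which $F$ is regular on a dense open subset of the projectivization and $\dim\mathrm{span}(F(V)) = D_{m_i}$. Write $K$ for the $t$-dimensional kernel of $\langle\cdot,\cdot\rangle_{r,s,t}$ and set $\Theta^\circ := \{(V_1, V_2) \in \mathcal{G}_1 \times \mathcal{G}_2 : V_1 \perp V_2, \ V_1 \cap K = 0\}$. Over the open subset $B = \{V_1 \in \mathcal{G}_1 : V_1 \cap K = 0\}$, the dimension $\dim V_1^\perp = n - m_1$ is constant, and the fiber of the first projection $\pi_1 : \Theta^\circ \to B$ over $V_1$ is $\mathrm{Gr}(m_2+1, V_1^\perp)$; hence $\Theta^\circ$ is a Grassmannian bundle over the irreducible $B$, and so irreducible. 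The hypothesis $m_1 + m_2 \leq r+s-2$ rewrites as $r+s-1-m_2 \geq m_1+1$, which equals $\dim(V_2^\perp/K)$ for a generic $V_2 \in \mathcal{G}_2$ (i.e.\ $V_2 \cap K = 0$); this is exactly what is needed to find a subspace $V_1 \subset V_2^\perp$ of dimension $m_1+1$ meeting $K$ trivially. Therefore the second projection $\pi_2: \Theta^\circ \to \mathcal{G}_2$ is also dominant, and by irreducibility of $\Theta^\circ$ the preimages $\pi_1^{-1}(U_1)$ and $\pi_2^{-1}(U_2)$ are each open dense in $\Theta^\circ$, so their intersection is nonempty and gives the desired pair.

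The main obstacle is the genericity argument above. Because $V_1^\perp$ always contains $K$, orthogonality interacts delicately with the degeneracy of the form: one must rule out the possibility that every orthogonal pair is forced out of $U_1$ or $U_2$. The inequality $m_1 + m_2 \leq r+s-2$ is precisely tight for the required transversality to $K$, and is the new complication beyond the non-degenerate setting treated in~\cite{GN}.
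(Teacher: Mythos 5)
Your proof is correct, and the first half (transporting $\perp_{r,s,t}$-orthogonality through $F$ to $\perp_{p,q}$-orthogonality of the spans, then applying non-degeneracy of $\langle\cdot,\cdot\rangle_{p,q}$ on $\mathbb C^{N+1}$ to get $D_{m_1}+D_{m_2}+2\leq N+1$) is essentially identical to the paper's argument. Where you diverge is in the genericity claim, and the difference is genuine. The paper proceeds perturbatively: it starts from an explicit pair of mutually orthogonal non-degenerate lifts $\tilde M_1,\tilde M_2$ (possible since $m_1+m_2+2\leq r+s$), arranges $\mathbb P\tilde M_1\notin Z_{m_1}$ by openness of non-degeneracy, fixes a small neighborhood $\mathcal U_1$ of good $M_1$'s, and then if $\mathbb P\tilde M_2\in Z_{m_2}$, nudges $\tilde M_2$ and, by projecting to the first $r+s$ coordinates, finds a new orthogonal $\tilde M_1'$ still inside $\mathcal U_1$. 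Your argument instead builds the incidence variety $\Theta^\circ=\{(V_1,V_2):V_1\perp V_2,\ V_1\cap K=0\}$, notes that $\pi_1:\Theta^\circ\to B$ is a Grassmannian bundle (so $\Theta^\circ$ is irreducible) and that $\pi_2$ is dominant precisely because $m_1+m_2\leq r+s-2$ leaves room for an $(m_1{+}1)$-plane in $V_2^\perp$ transverse to $K$, and then intersects the two dense preimages $\pi_1^{-1}(U_1)$, $\pi_2^{-1}(U_2)$. Your route is cleaner and more systematic, trading the paper's hands-on perturbation for a standard irreducibility-of-incidence-correspondence argument; the paper's version is more self-contained and makes the role of non-degeneracy of the lifts (rather than just transversality to $K$) explicit. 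One small point to fix: the generic locus $U_i$ should be described as the complement of a proper complex-analytic (rather than Zariski-closed) subvariety of $\mathcal G_i$, as in the paper's $Z_m$, since $F$ is a priori only holomorphic; this does not affect the argument, as the complement of a proper analytic subvariety of an irreducible variety is still dense and its preimage under the surjective/dominant projections of the irreducible $\Theta^\circ$ remains dense.
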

\begin{proof}
Let $G_m$ be the Grassmannian of $m$-dimensional linear subspaces in $\mathbb P^n$ and $Z_m\subset G_m$ be a proper complex analytic subvariety such that $\dim(\mathrm{span}(F(M)))=D_m$ for every $M\in G_m\setminus Z_m$. For any $m_1,m_2\in\mathbb N$ such that $m_1+m_2\leq r+s-2$,  we claim that there exists $(M_1,M_2)\in (G_{m_1}\setminus Z_{m_1})\times (G_{m_2}\setminus Z_{m_2})$ such that $M_1\perp_{r,s,t} M_2$. Assume this claim for the moment. Since $F$ is orthogonal, it follows that $\mathrm{span}(F(M_1))\perp_{p,q}\mathrm{span}(F(M_2))$. As $p+q=N+1$, by lifting these two orthogonal linear subspaces in $\mathbb P^N$ to $\mathbb C^{N+1}$ (equipped with $\langle\cdot,\cdot\rangle_{p,q}$), we get the desired result 
$$
N+1\geq (\dim(\mathrm{span}(F(M_1)))+1)+(\dim(\mathrm{span}(F(M_2)))+1)=D_{m_1}+D_{m_2}+2.
$$

To prove the claim above, consider the Euclidean space $\mathbb C^{n+1}$ equipped with the Hermitian form $\langle\cdot,\cdot\rangle_{r,s,t}$. Since $m_1+m_2+2\leq r+s$, there exist orthogonal linear subspaces $\tilde M_1, \tilde M_2\subset\mathbb C^{n+1}$ such that $\dim(\tilde M_1)=m_1+1$, $\dim(\tilde M_2)=m_2+1$ and $\tilde M_1,\tilde M_2$ are non-degenerate subspaces (i.e. the restrictions of $\langle\cdot,\cdot\rangle_{r,s,t}$ on $\tilde M_1,\tilde M_2$ are non-degenerate.) Since non-degeneracy is an open condition, we can always choose $\tilde M_1$ such that the projectivization $M_1:=\mathbb P\tilde M_1$ does not lie on $Z_{m_1}$. Let $\mathcal U_1\subset (G_{m_1}\setminus Z_{m_1})$ be a neighborhood of $M_1$ such that the lifting of every $M_1'\in\mathcal U_1$ to $\mathbb C^{n+1}$ is non-degenerate. Now if $M_2:=\mathbb P\tilde M_2$ happens to lie on $Z_{m_2}$, we pick another non-degenerate $\tilde M_2'$ nearby such that $M_2':=\mathbb P\tilde M_2'\not\in Z_{m_2}$. Since every linear subspace involved is non-degenerate with respect to $\dotproduct_{r,s,t}$, by projecting to the first $r+s$ coordinates in $\mathbb C^{n+1}$, we see that as long as $\tilde M_2'$ has been chosen sufficiently close to $\tilde M_2$, we can find some $(m_1+1)$-dimensional linear subspace $\tilde M_1'\subset\mathbb C^{n+1}$ orthogonal to $\tilde M_2'$ such that $M_1':=\mathbb P\tilde M_1'\in\mathcal U_1$. Then, $(M_1', M_2')\in (G_{m_1}\setminus Z_{m_1})\times (G_{m_2}\setminus Z_{m_2})$ is a pair of orthogonal linear subspaces in $\mathbb P^n$ with respect to $\perp_{r,s,t}$.
\end{proof}

Next, we recall some notations and a couple of computations done in~\cite{GN}, which will be used in the proofs of our main theorems.

Let $n,N\in\mathbb N$ such that $n+1\leq N<\binom{n+2}{2}=\binom{n+2}{n}$. By considering the $n$-th Macaulay representation of $N$, we deduce that $N$ is of the following form:
$$
N=N(n;a,b):=\binom{n+1}{n}+\cdots+\binom{n-a+1}{n-a}+ b
$$
for some integers $a,b\geq 0$ such that $b\leq n-a-1$. In fact, the $n$-th Macaulay's representation of $N(n;a,b)$ for $b\geq 1$ is
$$
N(n;a,b)=\binom{n+1}{n}+\cdots+\binom{n-a+1}{n-a}+ \binom{n-a-1}{n-a-1}+\cdots+\binom{n-a-b}{n-a-b}.
$$
and
$$
N(n;a,0)=\binom{n+1}{n}+\cdots+\binom{n-a+1}{n-a}.
$$

\begin{lemma}[\cite{GN}]\label{nab}
$$
N(n;a,b)^{-<n>}=\left\{
\begin{matrix}[lcl]
N(n-1;a,b)&\mathrm{if}& n-a-b\geq 2;\\
N(n-1;a,b-1)&\mathrm{if}& n-a-b=1\,\,\mathrm{and}\,\,b\geq 1. 
\end{matrix}
\right.
$$
\end{lemma}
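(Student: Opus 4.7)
The approach is a direct, line-by-line computation using the explicit $n$-th Macaulay representation of $N(n;a,b)$ recorded just before the statement. First, I would verify that, for $b\ge 1$,
$$
N(n;a,b) = \sum_{i=0}^{a}\binom{n+1-i}{n-i} + \sum_{j=0}^{b-1}\binom{n-a-1-j}{n-a-1-j}
$$
really is the Macaulay representation: the top indices $n+1,n,\ldots,n-a+1,n-a-1,\ldots,n-a-b$ form a strictly decreasing sequence (the skip from $n-a+1$ to $n-a-1$ uses $b\ge 1$), while the bottom indices run consecutively from $n$ down to $\delta=n-a-b\ge 1$. The case $b=0$ involves only the first sum and is even simpler.

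Next, I would apply $\cdot^{-<n>}$ termwise through the rule $\binom{a_j}{j}\mapsto\binom{a_j-1}{j-1}$. Each term of the first sum becomes $\binom{n-i}{n-1-i}$, and together these reassemble precisely into the first block of $N(n-1;a,\cdot)$. Each term of the second sum becomes $\binom{n-a-2-j}{n-a-2-j}$, which are exactly the candidate second-block entries for $N(n-1;a,b)$.

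The two cases then correspond to whether the last of these second-block entries, $\binom{n-a-b-1}{n-a-b-1}$, equals $1$ or $0$. If $n-a-b\ge 2$, then for every $j\in\{0,\ldots,b-1\}$ we have $n-a-2-j\ge 1$, so each $\binom{n-a-2-j}{n-a-2-j}=1$, the transformed second block sums to $b$, and since $b\le (n-1)-a-1$, the total reassembles as $N(n-1;a,b)$. If instead $n-a-b=1$ and $b\ge 1$, the same is true for $j=0,\ldots,b-2$ but the final transformed term is $\binom{0}{0}=0$ by the convention in the definition; the second block then sums to $b-1$, and since $b-1=n-a-2=(n-1)-a-1$ the total reassembles as $N(n-1;a,b-1)$.

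I expect no real obstacle: the only delicate point is the boundary behavior of that last term when $n-a-b=1$, where the convention $\binom{0}{0}=0$ produces precisely the shift from $b$ to $b-1$ that distinguishes the two cases of the statement. Everything else is careful bookkeeping on the Macaulay representation, together with a verification that the resulting parameter $b'\in\{b,b-1\}$ satisfies $b'\le (n-1)-a-1$ so that $N(n-1;a,b')$ is a legitimate instance of the notation.
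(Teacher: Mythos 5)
Your proposal is correct and is exactly the computation the paper's one-line proof ("Follows directly from the $n$-th Macaulay representation of $N(n;a,b)$") is alluding to: apply $\cdot^{-<n>}$ termwise to the displayed Macaulay representation, note that the transformed first block is the first block of $N(n-1;a,\cdot)$, and observe that the convention $\binom{0}{0}=0$ kills the last term of the second block precisely when $n-a-b=1$, producing the shift from $b$ to $b-1$. The bookkeeping, including the check that the resulting $b'$ satisfies $b'\le (n-1)-a-1$, is carried out correctly.
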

\begin{proof}
Follows directly from the $n$-th Macaulay representation of $N(n;a,b)$.
\end{proof}

\begin{proposition}[\cite{GN}]\label{dim prop}
Let $n\in\mathbb N^+$ and $a,b\in\mathbb N$. Let $g:U\subset\mathbb P^n\rightarrow\mathbb P^{N(n;a,b)}$ be a local holomorphic map whose image is not contained in any proper linear subspace. Let $D_m=\dim(\mathrm{span}(g(M\cap U)))$ for a general $m$-dimensional linear subspace $M$ intersecting $U$. Then,
$$
D_m\geq\left\{
\begin{matrix}[lcl]
N(m;a,b) &\,\,\mathrm{if}& a+b+1\leq m\leq n-1;\\
N(m;a,m-a-1) &\,\,\mathrm{if}& a+1\leq m\leq a+b 
\end{matrix}
\right.
$$
\end{proposition}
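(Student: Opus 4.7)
The plan is to iterate Theorem~\ref{dim thm} in a descending fashion from dimension $n$ down to dimension $m$, using Lemma~\ref{nab} at each step to simplify the resulting Macaulay shift. The base of the descent is $D_n = N(n;a,b)$, which is forced by the hypothesis that the image of $g$ is not contained in any proper linear subspace. The theorem will be applied along a chain $M_n \supset M_{n-1} \supset \cdots \supset M_m$ of successively general hyperplane cuts; the standard fact that the composition of generic choices at each level realises a general $m$-dimensional linear subspace ensures that the exceptional loci excluded by Theorem~\ref{dim thm} at each step glue together into a proper subvariety of $G_m$. This is the one genericity point that requires some care, but it is routine.

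First I would handle the range $a+b+1 \leq m \leq n-1$ by descending induction, with hypothesis $D_{m+1} \geq N(m+1;a,b)$. Applying Theorem~\ref{dim thm} to the restriction of $g$ to $M_{m+1}$ (viewed as a map into the span of its image) with $M_m$ a general hyperplane in $M_{m+1}$ yields $D_m \geq N(m+1;a,b)^{-<m+1>}$. Since $m+1 \geq a+b+2$, the first case of Lemma~\ref{nab} simplifies the right-hand side to $N(m;a,b)$, closing the induction.

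For the range $a+1 \leq m \leq a+b$ the same descending induction continues, but with a different lower bound. The transition happens at the step $m+1 = a+b+1 \to m = a+b$: here $(a+b+1)-a-b = 1$ and $b \geq 1$, so the second case of Lemma~\ref{nab} gives $D_{a+b} \geq N(a+b;a,b-1)$, which is precisely $N(m;a,m-a-1)$ at $m = a+b$. Note that this transition also works when the first range is empty (i.e.\ $a+b = n-1$), since one still has $D_n = N(n;a,b)$ available as the input to the first application of the theorem. For $a+1 \leq m < a+b$, the inductive hypothesis $D_{m+1} \geq N(m+1;a,m-a)$ satisfies $(m+1)-a-(m-a) = 1$ with $m-a \geq 1$, so the second case of Lemma~\ref{nab} applies again and produces $D_m \geq N(m;a,m-a-1)$, completing the induction. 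I do not foresee any serious obstacle beyond the bookkeeping of the two cases of Lemma~\ref{nab} and the routine genericity argument mentioned in the first paragraph.
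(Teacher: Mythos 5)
Your proposal is correct and is precisely the natural downward induction that the cited reference [GN] would carry out: one iterates Theorem~\ref{dim thm} along a chain of general hyperplane cuts $M_n \supset M_{n-1} \supset \cdots \supset M_m$, using Lemma~\ref{nab} at each step to evaluate the Macaulay shift, with the transition between the two formulas occurring exactly when the shift index hits $1$ and the second branch of Lemma~\ref{nab} kicks in. The present paper does not reproduce the argument (it just points to Proposition~3.4 of \cite{GN}), but your case analysis — including the edge case $a+b=n-1$ where the first range is empty — matches what that proof must be, and the genericity bookkeeping you defer is indeed standard.
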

\begin{proof}
See~\cite{GN}, Proposition 3.4 therein.
\end{proof}

We are now ready to prove the key statement that will lead to our results for the Generalized SOS problem.

\begin{theorem}\label{gap thm}
Let $n,N\geq 1$ and $r,s,t,p,q\in\mathbb N$ such that $r+s+t=n+1$ and $p+q=N+1$. Let $F:\mathbb P^n\dashrightarrow\mathbb P^N$ be an orthogonal rational map with respect to $(\perp_{r,s,t},\perp_{p,q})$. If there exists a non-negative integer $a$ such that
\begin{equation}\label{hypo ineq}
(a+1)(n+t+1)\leq N\leq (a+2)(n-a-t)-2,
\end{equation}
then the image of $F$ lies in a hyperplane.
\end{theorem}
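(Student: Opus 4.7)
I would argue by contradiction, assuming $F(\mathbb P^n)$ is not contained in any hyperplane so that its image spans $\mathbb P^N$. After passing to a local holomorphic representative on an open $U \subset \mathbb P^n$, let $D_m := \dim(\mathrm{span}(F(M \cap U)))$ for a general $m$-dimensional linear subspace $M \subset \mathbb P^n$; by assumption $D_n = N$. The strategy is to play the lower bounds on $D_m$ coming from Proposition~\ref{dim prop} against the orthogonality-based upper bound $D_{m_1} + D_{m_2} \leq N - 1$ of Proposition~\ref{contradict}, which holds for any $m_1 + m_2 \leq r + s - 2 = n - t - 1$.

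The first step is to translate the hypothesis into Macaulay language. A direct check using $N(n;a,0) = (a+1)(n+1) - a(a+1)/2$ shows that $N(n;a,0) \leq (a+1)(n+t+1)$ and $(a+2)(n-a-t) - 2 < N(n;a+1,0)$, so the $n$-th Macaulay representation of $N$ is $N = N(n;a,b)$ with
$$(a+1)t + \tfrac{a(a+1)}{2} \leq b \leq n - a - 1.$$
Proposition~\ref{dim prop} then yields $D_m \geq N(m;a,b)$ for $a+b+1 \leq m \leq n - 1$, and $D_m \geq N(m;a,m-a-1)$ for $a+1 \leq m \leq a + b$.

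Next I would choose $m_1 \leq m_2$ with $m_1 + m_2 = n - t - 1$ and $m_1, m_2 \geq a + 1$; such a choice is feasible because the hypothesis $(a+1)(n+t+1) \leq (a+2)(n-a-t)-2$ forces $n \geq (2a+3)t + a^2 + 5a + 3$, hence $n - t - 1 \geq 2(a+1)$. If both $m_i \geq a + b + 1$ (pure ``Case 1''), a direct summation of Macaulay formulas gives $N(m_1;a,b) + N(m_2;a,b) = N + \bigl(b - (a+1)t - a(a+1)/2\bigr) \geq N$, using the lower bound on $b$. If both $m_i \leq a + b$ (pure ``Case 2''), one gets $N(m_1;a,m_1-a-1) + N(m_2;a,m_2-a-1) = (a+2)(n-t-1) - a(a+1)$, which equals $(a+2)(n-a-t) - 2$ and hence dominates $N$ by the upper hypothesis. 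In either pure case $D_{m_1} + D_{m_2} \geq N$, contradicting Proposition~\ref{contradict}.

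The delicate step is the mixed case, which can arise only when $n - t$ is even and $b = (n-t)/2 - a - 1$, forcing the split $m_1 = a+b,\ m_2 = a+b+1$. Combining Case~2 for $m_1$ with Case~1 for $m_2$ produces
$$D_{m_1} + D_{m_2} \geq N + \bigl(b - (a+1)t - \tfrac{a(a+1)}{2}\bigr) - 1,$$
which is only $\geq N - 1$ at the extremum $b = (a+1)t + a(a+1)/2$. That extremum corresponds to $n = (2a+3)t + a^2 + 3a + 2$, and at those values a direct substitution shows $(a+2)(n-a-t) - 2 = N - 1$, contradicting the upper hypothesis $N \leq (a+2)(n-a-t)-2$. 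Hence the hypothesis actually forces $b \geq (a+1)t + a(a+1)/2 + 1$ in the mixed case, which upgrades the bound to $D_{m_1} + D_{m_2} \geq N$ and closes the argument. I expect this mixed-case compatibility check to be the main obstacle: the naive application of the two Macaulay lower bounds is off by exactly $1$, and resolving it requires noticing that the upper endpoint of the hypothesis precisely rules out the extremal parameter values that would spoil the argument. Everything else is Macaulay arithmetic with \mbox{Propositions \ref{dim prop} and \ref{contradict}.}
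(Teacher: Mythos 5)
Your proof is correct and follows essentially the same strategy as the paper: Macaulay representation of $N=N(n;a,b)$, a balanced split $m_1+m_2=n-t-1$ with $m_i\geq a+1$, the lower bounds from Proposition~\ref{dim prop}, and the orthogonality upper bound from Proposition~\ref{contradict}. Two small points. First, there is an arithmetic slip: the hypothesis forces $n\geq a^2+(3+2t)a+3t+3$, not $a^2+5a+(2a+3)t+3$; the desired consequence $n-t-1\geq 2(a+1)$ still follows since $n-t-1\geq (a+1)(a+2t)+2(a+1)$. Second, your mixed-case detour is avoidable: rather than comparing $D_{m_1}+D_{m_2}$ to $N$ via the lower bound on $b$ (which is indeed off by exactly $1$) and then ruling out the extremal $b$ by a separate substitution, the paper computes in both the pure-Case-2 and the mixed sub-case that $D_{m_1}+D_{m_2}\geq(a+2)(n-t-1)-a(a+1)=(a+2)(n-a-t)-2$, and then invokes the upper hypothesis $N\leq(a+2)(n-a-t)-2$ once. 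This makes the paper's Case~II uniform and removes the apparent off-by-one fragility you flagged, but your route reaches the same conclusion and the ``compatibility check'' you carried out is correct.
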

\begin{proof}
We will prove by contradiction and assume that $N$ satisfies (\ref{hypo ineq}).
Since 
$$
(a+1)(n+t+1)=\binom{n+1}{n}+\cdots+\binom{n-a+1}{n-a}+\dfrac{(a+1)(a+2t)}{2}
$$ 
and
$$
(a+2)(n-a-t)-2=\binom{n+1}{n}+\cdots+\binom{n-a+1}{n-a}
+\left(n-\dfrac{a^2+(5+2t)a+6+4t}{2}\right),
$$
we see that (\ref{hypo ineq}) is possible only if
$$
\dfrac{(a+1)(a+2t)}{2}\leq n-\dfrac{a^2+(5+2t)a+6+4t}{2}.
$$
Moreover, since the right hand side is not greater than $n-a-1$ and we have assumed that $N$ satisfies (\ref{hypo ineq}), we deduce that
$$
N=\binom{n+1}{n}+\cdots+\binom{n-a+1}{n-a}+b=N(n;a,b),
$$
for some $b$ satisfying
\begin{equation}\label{ineq 1}
\dfrac{(a+1)(a+2t)}{2}\leq b\leq n-\dfrac{a^2+(5+2t)a+6+4t}{2}.
\end{equation}

Let 
$
n_1:=\left[\dfrac{r+s}{2}\right]-1
\textrm{\,\,\,\,\,\,\,\,\,and\,\,\,\,\,\,\,\,}
n_2:=\left\{
\begin{matrix}n_1&\mathrm{ if\,\,} r+s \mathrm{\,\, is\,\, even;}\\
n_1+1&\mathrm{ if\,\,} r+s \mathrm{\,\, is\,\, odd.}
\end{matrix}\right.
$

Then, $n_1+n_2+t+1=n$ and hence
\begin{eqnarray*}
2n_1&\geq&n_1+n_2-1\\
&=&n-2-t\\
&\geq&\dfrac{(a+1)(a+2t)}{2}+\dfrac{a^2+(5+2t)a+6+4t}{2}-2-t
\,\,\,\,\,\,\,\,\,\,\,\,\,\,\,\,\,\,\,\,\textrm{(by~(\ref{ineq 1}))}\\
&=&a^2+a\left(3+2t\right)+1+2t.
\end{eqnarray*}

When $t\geq 1$, it follows immediately that $n_1\geq a+1$. On the other hand, when $t=0$, we get  $2n_1\geq a^2+3a+1=2a+(a^2+a+1)$, which as an inequality in integers, implies again that $n_1\geq a+1$. Therefore, we always have
\begin{equation}\label{n1 ineq}
n_1-a-1\geq 0.
\end{equation}
We now separate the argument into two cases:
$$
\mathbf{ Case\,\, I:\,\,}b\leq n_1-a-1\,\,\,\,\,\,\,\mathrm{and}\,\,\,\,\,\,\,
\mathbf{ Case\,\, II:\,\,}n_1-a\leq b.
$$
Let $D_m$ be the dimension of the linear span of the image under $F$ of a general $m$-dimensional linear subspace.

In \textbf{Case I}, since $n-1\geq n_2\geq n_1\geq a+b+1$, so by Proposition~\ref{dim prop} and (\ref{ineq 1}), 
\begin{eqnarray*}
D_{n_1}+D_{n_2}&\geq& N(n_1;a,b)+N(n_2;a,b)\\
&=&(a+1)(n_1+n_2+2-a)+2b\\
&=&(a+1)(r+s-a)+2b\\
&=&(a+1)(n+1-\frac{a}{2}-\frac{a+2t}{2})+2b\\
&=&(a+1)(n+1-\dfrac{a}{2})+b+\left(b-\dfrac{(a+1)(a+2t)}{2}\right)\\
&\geq&(a+1)(n+1-\dfrac{a}{2})+b\\
&=&N(n;a,b)=N,
\end{eqnarray*}
which contradicts Proposition~\ref{contradict} since $n_1+n_2=n-1-t=r+s-2$.


In \textbf{Case II},  we have $a+1\leq n_1\leq a+b$, in which the first inequality is just (\ref{n1 ineq}). Thus, by Proposition~\ref{dim prop}, 
$$
D_{n_1}\geq N(n_1;a;n_1-a-1)=(a+2)n_1-\dfrac{a^2+a}{2}
$$
and
$$
D_{n_2}\geq\left\{
\begin{matrix}
N(n_2;a;b) &\mathrm{\,\,if}& n_1=a+b &\textrm{and}& n_2=n_1+1;\\
N(n_2;a;n_2-a-1) &\mathrm{\,\,if}& n_1<a+b&\textrm{or}& n_2=n_1.
\end{matrix}
\right.
$$
Therefore,
{\small
$$
D_{n_1}+D_{n_2}\geq\left\{
\begin{matrix}
(a+2)n_1-\dfrac{a^2+a}{2}+(a+1)(n_2+1-\dfrac{a}{2})+b&\mathrm{\,\,if}& n_1=a+b &\textrm{and}& n_2=n_1+1;\\
(a+2)(n_1+n_2)-(a^2+a) &\mathrm{\,\,if}& n_1<a+b&\textrm{or}& n_2=n_1,
\end{matrix}
\right.
$$
}
which simplifies to
{\small
$$
D_{n_1}+D_{n_2}\geq\left\{
\begin{matrix}
(a+2)(n_1+n_2)-(a^2+a) &\mathrm{\,\,if}& n_1=a+b &\textrm{and}& n_2=n_1+1;\\
(a+2)(n_1+n_2)-(a^2+a) &\mathrm{\,\,if}& n_1<a+b&\textrm{or}& n_2=n_1.
\end{matrix}
\right.
$$
}
Thus, we always have 
$$
D_{n_1}+D_{n_2}\geq (a+2)(n_1+n_2)-(a^2+a)=(a+2)(n-a-t)-2\geq N,
$$
in which the last inequality follows since we have assumed (\ref{hypo ineq}). 
This again contradicts Proposition~\ref{contradict}.
\end{proof}

We can now use Theorem~\ref{gap thm} to prove our main results for the Generalized SOS problem.

\begin{theorem}\label{homo thm}
Let $\dotproduct_\star$ be a Hermitian form on $\mathbb C^n$ of an arbitrary signature and $\tau$ be the multiplicity of its zero eigenvalue.
Let $k_0$ be the largest $k\in\mathbb N$ such that $n\geq k^2+k(1+2\tau)+2+\tau$. For a Hermitian \textbf{bihomogeneous} polynomial $B(z,\bar z)\in\mathbb C[z_1,\ldots,z_n,\bar z_1,\ldots,\bar z_n]$, where $z=(z_1,\ldots,z_{n})$, let $R$ be the rank of $B(z,\bar z)\|z\|_\star^2$. Then, either
$$
R\geq (k_0+1)n-(k_0+1)(k_0+\tau)
$$
or there exists $k\in\{1,\ldots,k_0\}$ such that 
$$
kn-k(k-1+\tau)\leq R\leq kn+k\tau.
$$
\end{theorem}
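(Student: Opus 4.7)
The plan is to recast the rank problem for $B(z,\bar z)\|z\|^2_\star$ as a question about an orthogonal rational map and then invoke Theorem~\ref{gap thm}. After a linear change of coordinates on $\mathbb C^n$, we may assume $\|z\|^2_\star = \langle z, z\rangle_{r,s,\tau}$, where $(r, s, \tau)$ is the signature of $\dotproduct_\star$, so in particular $r + s + \tau = n$. Assuming $B \neq 0$ (otherwise the statement is vacuous), the same diagonalization argument that produced Eq.~(\ref{decomposition}), now applied in the $n$-variable setting, yields linearly independent bihomogeneous polynomials $h^\pm_i$ of a common degree, hence an orthogonal rational map $F : \mathbb P^{n-1} \dashrightarrow \mathbb P^{R-1}$ with respect to $(\perp_{r,s,\tau}, \perp_{p,q})$ whose image spans $\mathbb P^{R-1}$, where $R = p + q$.

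The core step is to apply Theorem~\ref{gap thm} with its $n$ replaced by $n-1$, its $N$ by $R-1$, and its $t$ by $\tau$. Because the image of $F$ spans all of $\mathbb P^{R-1}$, the conclusion ``lies in a hyperplane'' is untenable, so the hypothesis of Theorem~\ref{gap thm} must fail for every $a \in \mathbb N$. In other words, no non-negative integer $a$ can satisfy
$$
(a+1)(n + \tau) \;\leq\; R - 1 \;\leq\; (a+2)(n - 1 - a - \tau) - 2.
$$
Writing $k = a + 1$, this forbids the integer range
$$
kn + k\tau + 1 \;\leq\; R \;\leq\; (k+1)n - (k+1)(k + \tau) - 1
$$
for each $k \geq 1$. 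A direct computation shows this range is non-empty precisely when $n \geq k^2 + k(1 + 2\tau) + \tau + 2$, i.e., exactly when $1 \leq k \leq k_0$. For $k > k_0$ the forbidden range collapses, which is why the tail $R \geq (k_0+1)n - (k_0+1)(k_0+\tau)$ is left unrestricted.

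It remains to pin down the lower end of the spectrum. This is provided by Proposition~\ref{lower bound}, which yields $R \geq r + s = n - \tau$; although stated for $\mathbb C^{n+1}$, its proof reduces to the non-degenerate case $t = 0$ and is insensitive to the ambient dimension, so it applies here verbatim. Combining $R \geq n - \tau$ with the exclusion of the forbidden ranges for $k = 1, \ldots, k_0$ partitions the permissible values of $R$ exactly into the union of the allowed intervals $[kn - k(k-1+\tau), kn + k\tau]$ for $k = 1, \ldots, k_0$ and the tail $[(k_0+1)n - (k_0+1)(k_0+\tau), \infty)$, which is the statement of the theorem.

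I anticipate the main obstacle to be arithmetic bookkeeping: tracking the off-by-one shifts that arise from substituting $n-1$ for $n$ and $R-1$ for $N$ in Theorem~\ref{gap thm}, and verifying that consecutive forbidden and allowed intervals abut correctly, namely that the upper endpoint of each forbidden range is exactly one less than the lower endpoint of the subsequent allowed range (which is the content of the identity $kn - k(k-1+\tau) = (k-1)n + (k-1)\tau + \bigl(n - (k-1)(k-1+\tau) - (k-1)\tau\bigr) + 1$ expanded and matched against the formula from $a = k - 2$). Beyond these computations, no new ideas are required.
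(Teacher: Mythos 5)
Your proof is correct and takes essentially the same route as the paper: pass to the orthogonal rational map $F:\mathbb P^{n-1}\dashrightarrow\mathbb P^{R-1}$, apply Theorem~\ref{gap thm} (with $n\to n-1$, $N\to R-1$, $t\to\tau$) to exclude the integer ranges $kn+k\tau+1\leq R\leq (k+1)n-(k+1)(k+\tau)-1$ for $1\leq k\leq k_0$, and anchor the bottom of the spectrum at $R\geq n-\tau$ via Proposition~\ref{lower bound}. One small slip in your closing parenthetical: the identity you wrote, $kn-k(k-1+\tau)=(k-1)n+(k-1)\tau+\bigl(n-(k-1)(k-1+\tau)-(k-1)\tau\bigr)+1$, is off by $k+\tau$; the correct (and much simpler) check is that the forbidden interval for $a=k-2$ has upper endpoint $k(n+1-k-\tau)-1$, which is exactly one below the allowed lower endpoint $kn-k(k-1+\tau)$, as these two expressions are literally equal after expansion. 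This bookkeeping error does not affect the validity of the rest of the argument.
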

\begin{proof}
By finding an orthogonal basis, we can identify $\dotproduct_\star$ with $\dotproduct_{\rho,\sigma,\tau}$ for some $\rho,\sigma,\tau$ such that $\rho+\sigma+\tau=n$.
As before (c.f.~Eq.(\ref{decomposition})), we obtain from $B(z,\bar z)\|z\|^2_{\rho,\sigma,\tau}$ an orthogonal rational map $F:\mathbb P^{n-1}\dashrightarrow\mathbb P^{R-1}$ whose image is not contained in any hyperplane, with respect to $(\perp_{\rho,\sigma,\tau},\perp_{p,q})$ for some $p,q\in\mathbb N$ such that $p+q=R$. Therefore, by Theorem~\ref{gap thm}, the integer $R-1$ cannot lie in any of the following closed intervals
$$
I_a:=[(a+1)(n+\tau), (a+2)(n-1-a-\tau)-2],\,\,\,\,\,\,\,\,\,\,\,\,\,\,a\in\mathbb N,
$$
whenever the interval is non-empty, i.e. when
$$
n\geq (a+2)(a+1+\tau)+(a+1)\tau+2.
$$
Let $a_0$ be the largest integer satisfying the last inequality. Then, by taking the complement of all non-empty $I_a$ in $\mathbb N$, it follows that there are three possibilities for $R-1$,

$(i)$ $R-1\geq (a_0+2)(n-1-a_0-\tau)-1$; 

$(ii)$ $R-1\leq n+\tau-1$; 

$(iii)$ there exists $a\in\{0,\ldots,a_0-1\}$ such that
$$
(a+2)(n-1-a-\tau)-1\leq R-1\leq (a+2)(n+\tau)-1.
$$
Let $k:=a+1$ and $k_0:=a_0+1$. Then $k_0$ is the largest integer satisfying
$$
n\geq (k+1)(k+\tau)+k\tau+2=k^2+k(1+2\tau)+2+\tau.
$$ 
Rewriting the above three possibilities in terms of $k$, we have

$(i)$ $R\geq (k_0+1)n-(k_0+1)(k_0+\tau)$; \,\,\,\,\,\,\,or

$(ii)$ $R\leq n+\tau$; \,\,\,\,\,\,\,or

$(iii)$ there exists $k\in\{1,\ldots,k_0-1\}$ such that
$$
(k+1)(n-k-\tau)\leq R\leq (k+1)(n+\tau).
$$
By Proposition~\ref{lower bound}, we necessarily have $R\geq \rho+\sigma=n-\tau$. If we combine this with $(ii)$, we see that $(iii)$ also holds for $k=0$ and hence we can now write

$(i)$ $R\geq (k_0+1)n-(k_0+1)(k_0+\tau)$; \,\,\,\,\,\,\,or

$(ii)$ there exists $k\in\{1,\ldots,k_0\}$ such that
$$
k(n-k+1-\tau)\leq R\leq k(n+\tau)
$$
and the proof is complete.
\end{proof}

\begin{proof}[Proof of Theorem~\ref{general thm}]
Similar to the proof of Theorem~\ref{homo thm}, we can identify $\dotproduct_\star$ with $\dotproduct_{\rho,\sigma,\tau}$ for some $\rho,\sigma,\tau$ such that $\rho+\sigma+\tau=n$.
If $A(z,\bar z)$ is an arbitrary Hermitian polynomial, then $A(z,\bar z)\|z\|_{\rho,\sigma,\tau}^2$ may not be bihomogeneous but we can still write
$$
A(z,\bar z)\|z\|_{\rho,\sigma,\tau}^2=|P^+_1|^2+\cdots+|P^+_p|^2-|P^-_1|^2-\cdots-|P^-_q|^2
$$
for some polynomials $P^+_1,\ldots,P^+_p,P^-_1,\ldots,P^-_q$ in $z$ as in Eq.(\ref{intro eq}) in the introduction.
We can then homogenize the equation by using the substitution $z_j=w_j/w_{n+1}$, for $1\leq j\leq n$. After multiplying the resulting equation by a factor of $|w_{n+1}|^{2d}$ for some $d\in\mathbb N$, we get an equation of Hermitian bihomogeneous polynomials of the form
$$
\tilde A(w,\bar w)\|w\|^2_{\rho,\sigma,\tau+1}=|\tilde P^+_1|^2+\cdots+|\tilde P^+_p|^2
-|\tilde P^-_1|^2-\cdots-|\tilde P^-_q|^2,
$$
where $w=(w_1,\ldots,w_{n+1})$ and $\tilde A(w,\bar w)\|w\|^2_{\rho,\sigma,\tau+1}$ is also of rank $R$. We can now apply the results for the bihomogeneous case in Theorem~\ref{homo thm}. Note that since we have introduced one more variable (namely, $w_{n+1}$), the resulting orthogonal rational map is from $\mathbb P^n$ (instead of $\mathbb P^{n-1}$ as in Theorem~\ref{homo thm}) to $\mathbb P^{R-1}$, with respect to $(\perp_{\rho,\sigma,\tau+1},\perp_{p,q})$, for some $p,q\in\mathbb N$ such that $p+q=R$. Thus, if we let $\tau'=\tau+1$ and $n'=n+1$, then $\rho+\sigma+\tau'=n'$ and hence we deduce by from Theorem~\ref{homo thm} by substitution that, if we let $k_0$ be the largest integer $k$ satisfying 
$$
n+1\geq k^2+k(1+2(\tau+1))+2+(\tau+1)=k^2+k(3+2\tau)+3+\tau,
$$
then we have either

$(i)$ $R\geq (k_0+1)(n+1)-(k_0+1)(k_0+\tau+1)=(k_0+1)n-(k_0+1)(k_0+\tau)$; \,\,\,\,\,\,\,or

$(ii)$ there exists $k\in\{1,\ldots,k_0\}$ such that
$$
\begin{matrix}[crcccl]
&k(n+1)-k(k-1+\tau+1)&\leq &R& \leq& k(n+1)+k(\tau+1)\\
\Leftrightarrow&kn-k(k-1+\tau)&\leq&R&\leq&kn+k(2+\tau).
\end{matrix}
$$
\end{proof}

\end{document}